\documentclass[a4paper,12pt]{article}
\usepackage{comment}
\usepackage{cite}
\usepackage{amsmath,amssymb,amsfonts}
\usepackage[T1]{fontenc}
\usepackage[utf8]{inputenc}
\usepackage{graphicx}
\usepackage{fancyhdr}
\usepackage{float}
\usepackage{xcolor}
\usepackage{authblk}
\usepackage{mathrsfs}
\usepackage{empheq}
\usepackage[hyphens]{url}
\usepackage{hyperref}
\usepackage{amsthm}
\usepackage{tikz}
\usetikzlibrary{decorations.markings}
\usepackage{enumitem}
\usepackage{geometry}

\theoremstyle{plain}
\newtheorem{theorem}{Theorem}[section]
\newtheorem{proposition}[theorem]{Proposition}
\newtheorem{lemma}[theorem]{Lemma}

\theoremstyle{definition}

\theoremstyle{remark}
\newtheorem{remark}[theorem]{Remark}

\begin{document}

\title{An involutive perspective on Eisenstein's proof of quadratic reciprocity}

\author[$\dagger$]{Jean-Christophe {\sc Pain}$^{1,2,}$\footnote{jean-christophe.pain@cea.fr}\\
\small
$^1$CEA, DAM, DIF, F-91297 Arpajon, France\\
$^2$Universit\'e Paris-Saclay, CEA, Laboratoire Mati\`ere en Conditions Extrêmes,\\ 
F-91680 Bruy\`eres-le-Châtel, France
}

\date{}

\maketitle

\begin{abstract}
We revisit Eisenstein's geometric proof of quadratic reciprocity and make explicit the involutive symmetry underlying Eisenstein's lattice-point argument. Building on Gauss's lemma, we interpret the Legendre symbols as counts of lattice points in a finite rectangle and construct a simple fixed-point-free involution corresponding to the central symmetry of the rectangle, which exchanges points above and below the line $qx=py$. This reformulation highlights the involutive symmetry and places the classical proof in the spirit of Zagier-type involutive arguments. The approach shows how the reciprocity law emerges from an elementary combinatorial pairing principle.
\end{abstract}

\section{Introduction}

The law of quadratic reciprocity, which Gauss affectionately referred to as the ``Theorema Aureum'' (the ``Golden Theorem''), is one of the crown jewels of number theory \cite{Gauss1801}. It establishes a profound and surprising relationship between the solvability of two different quadratic equations: it states that for two distinct odd primes $p$ and $q$, whether $p$ is a square modulo $q$ is linked to whether $q$ is a square modulo $p$. Formally, using the Legendre symbol, the law is expressed as:
\[ 
\left(\frac{p}{q}\right) \left(\frac{q}{p}\right) = (-1)^{\frac{p-1}{2} \frac{q-1}{2}}.
\]
What makes this theorem truly unique in the history of mathematics is the sheer diversity and quantity of its demonstrations. More than two hundred proofs have been recorded \cite{Lemmermeyer2000}, illustrating the perpetual quest of mathematicians to simplify or reinterpret this fundamental result. Gauss himself published six proofs during his lifetime and sketched two others, employing methods ranging from pure induction to the theory of circular functions \cite{Ireland1990}. Eisenstein proposed an elegant geometric proof based on counting lattice points within a rectangle \cite{Eisenstein1845}. Other approaches utilize Gauss sums, Jacobi sums, or even concepts from topology and class field theory. This accumulation of proofs is not mere redundancy; each new method has often paved the way for major generalizations, such as higher reciprocity laws (cubic, biquadratic) or, more recently, the vast Langlands Program \cite{Langlands1967}.

The method of proof based on involutions and cancellation has a rich historical development spanning combinatorics and number theory. Its conceptual origin can be traced back to the simple principle that a finite set equipped with a fixed-point-free involution must have even cardinality. This principle was used implicitly in early combinatorial arguments, such as those of Euler on partitions and alternating sums \cite{Euler1748}, and was later formalized in the twentieth century as the general ``involution principle'' \cite{Garsia1981}. In number theory, symmetry and parity arguments appear in several classical contexts. For instance, Gauss's fourth proof of quadratic reciprocity utilizes lattice-point counting and symmetries to establish the reciprocity law \cite{Gauss1801}. Similarly, elementary proofs of Wilson's theorem rely on pairing elements in the multiplicative group modulo a prime \cite{Lagrange1771}. These early instances foreshadow the structural features of what is now termed ``Zagier-type'' proofs. Indeed, the modern archetype of this method is due to Don Zagier, who in 1990 presented an elegant one-sentence proof of Fermat's theorem on sums of two squares \cite{Zagier1990}. In this argument, a carefully constructed involution on a finite set of lattice configurations yields exactly one fixed point, directly establishing the existence of a representation of primes congruent to $1 \bmod 4$ as sums of two squares. The proof is remarkable not only for its brevity but also for reducing a deep arithmetic statement to a combinatorial parity phenomenon. Zagier-type proofs are now informally characterized by the following structural elements: a finite combinatorial model encoding arithmetic data, an explicitly defined involution, a precise analysis of fixed points, and a conclusion derived from parity or cancellation.

These methods lie at the intersection of combinatorics and arithmetic, often revealing hidden symmetries in results traditionally proved via algebraic or analytic machinery. While extremely effective in cases governed by multiplicative norms and parity, such as sums of two squares or the quadratic reciprocity law \cite{Gauss1801,Hardy2008}, they typically do not extend straightforwardly to problems requiring local-global considerations, such as the three-squares theorem or higher reciprocity laws \cite{Legendre1798,Hardy2008}. Nevertheless, the conceptual framework of Zagier-type proofs continues to inspire new insights in arithmetic combinatorics, suggesting that deep number-theoretic truths may sometimes be distilled from remarkably simple symmetry principles. 

The purpose of this note is to make explicit the involutive symmetry underlying Eisenstein's lattice-point argument. We show that the transformation naturally extends to a small symmetry group acting on the lattice rectangle, which clarifies the pairing mechanism behind the proof. The central involution appearing in the proof is naturally part of a Klein four group acting on the lattice rectangle.

\section{The classical parity proof of quadratic reciprocity via Gauss's lemma}

\begin{theorem}[Quadratic reciprocity]
Let $p$ and $q$ be distinct odd primes. Then
\[
\left(\frac{p}{q}\right)
\left(\frac{q}{p}\right)
=
(-1)^{\frac{(p-1)(q-1)}{4}}.
\]
\end{theorem}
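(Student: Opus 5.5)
We follow Eisenstein's route: reduce each Legendre symbol to a parity count via Gauss's lemma, then count lattice points in the rectangle $R=\{(x,y)\in\mathbb{Z}^2 : 1\le x\le \frac{p-1}{2},\ 1\le y\le \frac{q-1}{2}\}$, and show that $R$ is split by the line $qx=py$ into the two counts.

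\textbf{Step 1 (Gauss's lemma to a floor sum).} By Gauss's lemma, $\left(\frac{q}{p}\right)=(-1)^{\mu}$, where $\mu$ is the number of least positive residues of $q,2q,\dots,\frac{p-1}{2}q$ modulo $p$ that exceed $p/2$. Write $kq=p\lfloor kq/p\rfloor+r_k$. The reflected residues $p-r_k$ (for $r_k>p/2$) together with the $r_k<p/2$ form a permutation of $1,\dots,\frac{p-1}{2}$. Summing $kq$ over $k$ and reducing modulo $2$, using that $p$ and $q$ are odd, we obtain
\[
\mu\equiv \sum_{k=1}^{(p-1)/2}\left\lfloor \frac{kq}{p}\right\rfloor \pmod 2 .
\]
Hence $\left(\frac{q}{p}\right)=(-1)^{S(q,p)}$ with $S(q,p)=\sum_{k}\lfloor kq/p\rfloor$, and symmetrically $\left(\frac{p}{q}\right)=(-1)^{S(p,q)}$. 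This parity bookkeeping is the one step needing care: we must track the term $p\cdot\#\{r_k>p/2\}$ and the factor $(q-1)\sum k$, which is even.

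\textbf{Step 2 (lattice interpretation).} The quantity $\lfloor xq/p\rfloor$ counts the $y\ge1$ with $py<qx$. Since $xq/p<q/2$ for $x\le\frac{p-1}{2}$, all such $y$ satisfy $y\le \frac{q-1}{2}$. So $S(q,p)$ is the number of points of $R$ strictly below the line $qx=py$, and likewise $S(p,q)$ is the number strictly above it. No point of $R$ lies on the line, because $qx=py$ would force $p\mid x$ with $0<x<p$. Therefore
\[
S(q,p)+S(p,q)=|R|=\frac{p-1}{2}\cdot\frac{q-1}{2}.
\]

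\textbf{Step 3 (the involutive viewpoint).} To match the paper's theme, we can phrase Step 2 through the central symmetry $\sigma(x,y)=\left(\frac{p+1}{2}-x,\frac{q+1}{2}-y\right)$ of $R$. This map is an involution of $R$. It preserves the two triangles cut out by the line only up to a shift, so the cleanest statement is the partition used above rather than a literal swap. The main obstacle is therefore Step 1's parity reduction; Steps 2 and 3 are counting. Multiplying the two expressions gives
\[
\left(\frac{p}{q}\right)\left(\frac{q}{p}\right)=(-1)^{S(p,q)+S(q,p)}=(-1)^{\frac{(p-1)(q-1)}{4}},
\]
which is the statement of the theorem.
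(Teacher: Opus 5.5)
Your proof is correct. It shares the paper's skeleton (Gauss's lemma, lattice points of the rectangle, none on $qx=py$, complementary count), but it differs from the paper at both joints, and in each case your version is the one that holds. Use the paper's notation $S_-=\{qx>py\}$ and $S_+=\{qx<py\}$.

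At the first joint you prove $\mu\equiv\sum_k\lfloor kq/p\rfloor\pmod 2$, via the identity $(q-1)\sum_k k=p\bigl(\sum_k\lfloor kq/p\rfloor-\mu\bigr)+2\sum_{r_k>p/2}r_k$, which is worth writing out in full. The paper instead asserts ``by Gauss's lemma'' the equalities $N_p(q)=|S_+|$, $N_q(p)=|S_-|$ and $N_p(q)+N_q(p)=\frac{(p-1)(q-1)}{4}$. What is actually true is only $N_p(q)\equiv|S_-|$ and $N_q(p)\equiv|S_+|\pmod 2$: for $p=3$, $q=7$ one has $N_3(7)=0$, $N_7(3)=1$, $|S_-|=2$, $|S_+|=1$, $|S|=3$.

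At the second joint you obtain $|S_+|+|S_-|=|S|$ purely from the absence of lattice points on the line. The paper presents it as a consequence of the central symmetry $C$ (your $\sigma$) being a fixed-point-free involution that exchanges the two sides, and your caution in Step 3 is well founded:
\begin{itemize}
\item $C$ is the point reflection through $\bigl(\frac{p+1}{4},\frac{q+1}{4}\bigr)$, which lies on the line only if $p=q$.
\item It fixes that point when $p\equiv q\equiv 3\pmod 4$, e.g.\ $(1,2)$ for $p=3$, $q=7$.
\item It can keep a pair on one side: for $p=5$, $q=13$ it sends $(1,2)$ to $(2,5)$, and $qx>py$ at both.
\item No side-exchanging bijection can exist whenever $|S_+|\neq|S_-|$, as in the $p=3$, $q=7$ example.
\end{itemize}
Your phrase ``up to a shift'' is exactly right. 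The reflection $(x,y)\mapsto(\frac p2-x,\frac q2-y)$ through $(\frac p4,\frac q4)$, a point of the line, does exchange the two sides but does not preserve $\mathbb{Z}^2$; $\sigma$ is that map followed by translation by $(\frac12,\frac12)$.

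So the paper's involution supplies a narrative rather than a valid step. Your complementary count together with the parity lemma is what actually proves the theorem.
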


\begin{lemma}[Gauss's Lemma]
Let $p$ be an odd prime and let $a$ be an integer coprime to $p$. For $1 \le k \le \frac{p-1}{2}$, consider the least positive residues of
\[
a,2a,\dots,\frac{p-1}{2}a
\]
modulo $p$.

Let $N_p(a)$ denote the number of these residues that lie in the interval $\left(\frac{p}{2},p\right)$. Then
\[
\left(\frac{a}{p}\right)=(-1)^{N_p(a)}.
\]
\end{lemma}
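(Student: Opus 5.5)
The plan is the classical one: compare the product $a\cdot 2a\cdots\frac{p-1}{2}a$ with $\left(\frac{p-1}{2}\right)!$ modulo $p$, and then invoke Euler's criterion $\left(\frac{a}{p}\right)\equiv a^{(p-1)/2}\pmod p$. Write $m=\frac{p-1}{2}$. For $1\le k\le m$, let $r_k$ be the least positive residue of $ka$. Since $p\nmid a$ and $p\nmid k$, we have $r_k\neq 0$. Define $s_k=r_k$ if $r_k<p/2$ and $s_k=p-r_k$ if $r_k>p/2$; equality $r_k=p/2$ cannot occur because $p$ is odd. Then every $s_k$ lies in $\{1,\dots,m\}$, and $s_k\equiv \pm ka \pmod p$. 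The minus sign occurs exactly $N_p(a)$ times.

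The key step, and the only real obstacle, is to show that $k\mapsto s_k$ is injective, hence a bijection of $\{1,\dots,m\}$ onto itself. Suppose $s_j=s_k$ with $j\neq k$. Then $ja\equiv \pm ka\pmod p$, and cancelling $a$ (which is invertible mod $p$) gives $j\equiv \pm k\pmod p$. The plus sign is excluded because $1\le j,k\le m<p$ and $j\ne k$. The minus sign is excluded because $2\le j+k\le p-1$, so $p\nmid j+k$. So the $s_k$ are distinct, and therefore $\{s_1,\dots,s_m\}=\{1,\dots,m\}$.

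Multiplying the congruences $ka\equiv \pm s_k$ over $k=1,\dots,m$ then gives
\[
a^{m}\, m! \equiv (-1)^{N_p(a)}\prod_{k=1}^m s_k = (-1)^{N_p(a)}\, m! \pmod p.
\]
Since $m!$ is coprime to $p$, we can cancel it and obtain $a^{m}\equiv(-1)^{N_p(a)}\pmod p$. By Euler's criterion the left side is congruent to $\left(\frac{a}{p}\right)$. Both $\left(\frac{a}{p}\right)$ and $(-1)^{N_p(a)}$ lie in $\{\pm1\}$, and $p>2$, so a congruence between them modulo $p$ is an equality. This proves the lemma.

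Euler's criterion itself is standard. If it has to be justified here, I would do it in one line: the group $(\mathbb Z/p\mathbb Z)^\times$ is cyclic of even order $p-1$, so $x\mapsto x^{m}$ sends each element to $\pm1$, and it sends exactly the squares to $1$.
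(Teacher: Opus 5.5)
Your proof is correct and complete. The paper gives no proof of this lemma: it is quoted as a classical result and used as a black box, so there is no argument of the paper's to compare yours with. You have written the standard Gauss argument, and every step holds. The folded residues $s_k$ lie in $\{1,\dots,m\}$. The check $j\not\equiv\pm k \pmod p$ for distinct $j,k\le m$ gives the bijection. The comparison $a^m\,m!\equiv(-1)^{N_p(a)}m!\pmod p$ is right. Your derivation of Euler's criterion from the cyclicity of $(\mathbb Z/p\mathbb Z)^\times$ is adequate.

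In the paper's language, your injectivity step says two things. First, $\{1,\dots,m\}$ is a transversal for the fixed-point-free involution $x\mapsto p-x$ of Appendix A. Second, multiplication by $a$ permutes the orbits $\{s,p-s\}$, and $N_p(a)$ counts the $k$ for which $ka \bmod p$ is the representative lying outside $\{1,\dots,m\}$. Appendix A stops at $(-1)^{N_p(q)}=\prod_{x\le m}\varepsilon_x$, which merely restates the definition of $N_p(q)$. Your product congruence is exactly the step, absent from the paper, that ties this sign to $\left(\frac{q}{p}\right)$.

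Your setup also yields the link to lattice points. Sum $ka=p\lfloor ka/p\rfloor+r_k$ over $k$, and use $\sum_k s_k=(p^2-1)/8$ together with $r_k\equiv s_k+1\pmod 2$ exactly when $r_k>p/2$. This gives, for odd $a$, $N_p(a)\equiv\sum_{k=1}^{m}\lfloor ka/p\rfloor\pmod 2$, which is the bridge reciprocity actually needs. That bridge is only a congruence. The exact equalities $N_p(q)=|S_+|$ and $N_q(p)=|S_-|$, which the paper attributes to Gauss's lemma, do not follow from it and fail in general: for $p=5$, $q=7$ one has $N_5(7)=N_7(5)=1$, while $|S_+|=|S_-|=3$.
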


\begin{proof}[Proof of quadratic reciprocity]

For $1 \le x \le \frac{p-1}{2}$, write the Euclidean division
\[
qx = p m_x + r_x,
\qquad 0<r_x<p,
\]
so that
\[
m_x=\left\lfloor\frac{qx}{p}\right\rfloor.
\]
For fixed $x$, the integer $m_x$ counts the integers $y$ satisfying
\[
1\le y \le \frac{q-1}{2}, \qquad py < qx .
\]
Hence $m_x$ equals the number of lattice points in the vertical strip with coordinate $x$ satisfying $py<qx$. Let
\[
S=\{(x,y):1\le x\le (p-1)/2,\;1\le y\le (q-1)/2\}.
\]
Summing over all $x$ counts the lattice points of $S$ satisfying $py<qx$. Since $p$ and $q$ are coprime, the equation $qx=py$ has no solution with $1\le x\le (p-1)/2$ and $1\le y\le (q-1)/2$. Consider the central symmetry of the rectangle
\[
C(x,y)=\left(\frac{p+1}{2}-x,\frac{q+1}{2}-y\right).
\]
Since $C$ is a fixed-point-free involution of $S$, the lattice points split into disjoint pairs
\[
\{(x,y),C(x,y)\}.
\]
The segment joining $(x,y)$ and $C(x,y)$ crosses the line $qx=py$, so the two points lie on opposite sides of this line. Let
\[
S_- = \{(x,y)\in S : qx>py\},\qquad
S_+ = \{(x,y)\in S : qx<py\}.
\]
Thus $S_-$ consists of the lattice points on one side of the line $qx=py$ while $S_+$ consists of the points on the other side. Therefore
\[
|S_+|+|S_-|=|S|.
\]
But the rectangle contains
\[
|S|=\frac{p-1}{2}\cdot\frac{q-1}{2}
=\frac{(p-1)(q-1)}{4}
\]
lattice points. By Gauss's lemma these numbers coincide with
\[
N_p(q)=|S_+|, \qquad N_q(p)=|S_-|,
\]
and thus
\[
N_p(q)+N_q(p)=\frac{(p-1)(q-1)}{4}.
\]
Applying Gauss's lemma twice, we get
\[
\left(\frac{p}{q}\right)=(-1)^{N_p(q)},\qquad
\left(\frac{q}{p}\right)=(-1)^{N_q(p)},
\]
and multiplying the two identities yields
\[
\left(\frac{p}{q}\right)\left(\frac{q}{p}\right)
=(-1)^{N_p(q)+N_q(p)}
=(-1)^{\frac{(p-1)(q-1)}{4}},
\]
which proves the quadratic reciprocity law.

\end{proof}

\section{Proof based on involution arguments}

\subsection{Klein four group acting on the lattice rectangle}

We only recall the classical geometric argument underlying Eisenstein's proof. The central symmetry
\[
C(x,y)=\left(\frac{p+1}{2}-x,\frac{q+1}{2}-y\right)
\]
maps the rectangle $S$ onto itself. Consequently the two points $(x,y)$ and $C(x,y)$ lie on opposite sides of the line $qx=py$.
Thus $C$ exchanges the sets $S_+$ and $S_-$. The lattice points of $S$ are partitioned into pairs
\[
(x,y),\;C(x,y),
\]
and the transformation $C$ maps points above the line $qx=py$ to points below it.

\begin{proposition}[Symmetry of the lattice model]

Let
\[
S=\{(x,y):1\le x\le (p-1)/2,\;1\le y\le (q-1)/2\}.
\]
The reflections
\[
H(x,y)=(x,(q+1)/2-y),\qquad
V(x,y)=((p+1)/2-x,y)
\]
generate a group
\[
G=\langle H,V\rangle
\]
isomorphic to
\[
V_4\cong(\mathbb Z/2\mathbb Z)^2 .
\]
The central symmetry
\[
C=H\circ V
\]
acts as
\[
C(x,y)=\left(\frac{p+1}{2}-x,\frac{q+1}{2}-y\right).
\]
A simple geometric inspection shows that the segment joining $(x,y)$ and $C(x,y)$ necessarily crosses the line $qx=py$. Consequently one point of the pair lies above the line and the other below it.

\end{proposition}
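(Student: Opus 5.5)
The plan is to check the group-theoretic part of the Proposition by direct computation, and then to test the geometric claim through the linear form $f(x,y)=qx-py$. Carrying this out shows that the last assertion of the Proposition is false as stated, as explained below.

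\emph{Group structure.} Since $1\le y\le (q-1)/2$ holds if and only if $1\le (q+1)/2-y\le (q-1)/2$, the map $H$ sends $S$ bijectively to itself. The same argument in the first coordinate works for $V$. Applying each map twice gives the identity, so $H^2=V^2=\mathrm{id}$. The maps act on different coordinates, so $HV=VH$. A direct computation gives
\[
HV(x,y)=\left(\tfrac{p+1}{2}-x,\tfrac{q+1}{2}-y\right)=C(x,y).
\]
It remains to see that $\mathrm{id},H,V,C$ are pairwise distinct. Evaluate them at the corner $(1,1)\in S$. Since $p,q\ge 3$, we have $(p+1)/2-1\neq 1$ unless $p=3$, and $(q+1)/2-1\neq 1$ unless $q=3$. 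When $p$ or $q$ equals $3$ the relevant side of $S$ has length one and the corresponding reflection degenerates on $S$, so the statement $G\cong V_4$ should be read for the reflections as maps of $\mathbb{Z}^2$. As such they are clearly distinct non-identity commuting involutions, hence $G\cong(\mathbb{Z}/2\mathbb{Z})^2$.

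\emph{Crossing claim.} Compute $f$ along an orbit of $C$:
\[
f(C(x,y))=q\tfrac{p+1}{2}-qx-p\tfrac{q+1}{2}+py=\tfrac{q-p}{2}-f(x,y).
\]
So $f(P)+f(CP)=(q-p)/2$, and not $0$. The midpoint of $P$ and $CP$ is $\bigl(\tfrac{p+1}{4},\tfrac{q+1}{4}\bigr)$, which lies on the line $qx=py$ only when $p=q$. Hence the ``simple geometric inspection'' cannot succeed in general. Assume $q>p$. Then $P$ and $CP$ lie on the same side exactly when $0<f(P)<(q-p)/2$, and such points exist.

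Two cases make this concrete. First, if $p\equiv q\equiv 3\pmod 4$, then $C$ has the fixed point $\bigl(\tfrac{p+1}{4},\tfrac{q+1}{4}\bigr)\in S$; for $p=3$, $q=7$ this is $(1,2)$, with $f=1>0$. Second, with no fixed point involved, take $p=3$, $q=13$. Then $(1,3)$ and $C(1,3)=(1,4)$ satisfy $f=4$ and $f=1$, so both lie strictly on the same side.

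My proof would therefore establish everything up to the formula for $C$, and replace the final sentence by the identity $f+f\circ C=(q-p)/2$ together with these counterexamples. It follows that $C$ does \emph{not} in general exchange $S_+$ and $S_-$. The classical count $|S_+|+|S_-|=|S|$ holds anyway, trivially, because no lattice point of $S$ lies on the line.

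The main obstacle is this last point: no argument can prove the crossing statement, because it is false. Any correct involutive version must use a different map. A natural first candidate is the reflection across the diagonal after rescaling, i.e. the symmetry underlying $\sum\lfloor qx/p\rfloor+\sum\lfloor py/q\rfloor$. It is not $C$.
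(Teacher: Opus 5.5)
Your diagnosis is correct, and it necessarily parts ways with the paper. The paper's only support for the last sentence of the Proposition is the phrase ``a simple geometric inspection shows'', and the same unproved assertion is used in the proof of the theorem. That inspection conflates two rectangles. The line $qx=py$ is a diagonal of the real rectangle $[0,p/2]\times[0,q/2]$, and the point reflection through its centre $(p/4,q/4)$ does swap the two sides. But that reflection, $(x,y)\mapsto(p/2-x,q/2-y)$, does not preserve $\mathbb{Z}^2$. The lattice rectangle $S$ has centre $\bigl(\frac{p+1}{4},\frac{q+1}{4}\bigr)$, which is off the line, and this is exactly what your identity $f+f\circ C=(q-p)/2$ records. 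Your identity, your group computations and both counterexamples are correct: $(1,2)$ is fixed by $C$ when $(p,q)=(3,7)$, and $(1,3)$, $(1,4)$ both satisfy $qx>py$ when $(p,q)=(3,13)$. Your caveat is also right: for $p=3$ (resp.\ $q=3$) the map $V$ (resp.\ $H$) is the identity on $S$, so the group induced on $S$ is then only $\mathbb{Z}/2\mathbb{Z}$.

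Two refinements.

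First, ``such points exist'' is too strong if read for every pair. When $|q-p|=2$ there is no integer strictly between $0$ and $(q-p)/2=\pm1$, so $C$ genuinely exchanges the two sides (e.g.\ $(5,7)$). The same happens for $(13,17)$, because $17x-13y=1$ forces $x\equiv 10\pmod{13}$, outside $1\le x\le 6$. What your identity proves precisely, for $q>p$, is this: $C$ maps $S_+$ injectively into $S_-$, and the points of $S_-\setminus C(S_+)$ are exactly those with $0<f<(q-p)/2$. Hence the crossing property holds if and only if $|S_+|=|S_-|$. Since the Proposition is asserted for all distinct odd primes, your counterexamples suffice to refute it.

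Second, this shows the obstruction is not specific to $C$. Whenever $|S_+|\neq|S_-|$, no bijection whatsoever exchanges the two sides. In particular this happens whenever $p\equiv q\equiv 3\pmod 4$, since then $|S|$ is odd. So you should drop the suggestion that a different map could take over this role; the rescaled diagonal reflection $(x,y)\mapsto(py/q,qx/p)$ does not even preserve $\mathbb{Z}^2$. Were the crossing claim true in general, $\frac{(p-1)(q-1)}{4}$ would always be even and the theorem would collapse to $\left(\frac{p}{q}\right)\left(\frac{q}{p}\right)=1$, which is false for $(3,7)$. The real arithmetic input in Eisenstein's argument is the congruence $N_p(q)\equiv\sum_{x}\lfloor qx/p\rfloor=|S_-|\pmod 2$. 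The paper's equality $N_p(q)=|S_+|$ fails as well: for $(3,13)$ one has $N_3(13)=0$, $|S_+|=2$, $|S_-|=4$.
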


\begin{remark}

The involution used in the proof of quadratic reciprocity is therefore not an isolated combinatorial trick but arises naturally as the central element of a Klein four group acting on the lattice rectangle. The group $G$ acts on $S$ with orbits of size at most $4$, whenever the four reflections remain inside the rectangle; near the boundary some orbits collapse to size $2$. In all cases the central symmetry still yields a fixed-point-free pairing of the lattice points. The cancellation mechanism underlying the reciprocity law can thus be interpreted as a symmetry-induced pairing within this group action. 

\end{remark}

\vspace{1cm}

\begin{remark}
This involution perspective clarifies the structural symmetry implicit in Eisenstein's classical argument. Eisenstein's geometric proof of quadratic reciprocity \cite{Eisenstein1845} is based on counting lattice points in the rectangle $S$ introduced earlier. The key observation is that no lattice point of $S$ lies on this line, so the rectangle is partitioned into two complementary regions. By counting points strictly above and below the line, Eisenstein obtains
\[
N_p(q)+N_q(p)=|S|
=\frac{(p-1)(q-1)}{4},
\]
which yields quadratic reciprocity via Gauss's lemma.

The present formulation shares the same geometric setting but makes explicit a structural symmetry that remains implicit in the classical argument. Instead of relying solely on complementary counting, we define the involution which is simply the central symmetry of the rectangle $S$. This map exchanges lattice points above the line $qx=py$ with those below it and has no fixed points in $S$. Consequently, the equality $|S_+|+|S_-|=|S|$ and the parity cancellation underlying quadratic reciprocity follow directly from the involutive structure. In this sense, the argument may be viewed as a perspective on Eisenstein's geometric proof in the language of explicit involutions. The combinatorial cancellation that is implicit in the classical approach becomes transparent through the pairing mechanism induced by $C$. This perspective aligns the proof with the general framework of ``Zagier-type'' arguments, where arithmetic identities emerge from fixed-point-free involutions on finite sets. The involution used here is reminiscent (\textit{mutatis mutandis}) of Zagier's celebrated one-sentence proof that primes $p\equiv1\pmod4$ are sums of two squares \cite{Zagier1990}. In the present setting the involution arises naturally from the central symmetry of the lattice rectangle.

Thus, while the geometric configuration coincides with Eisenstein's, the emphasis here lies on isolating and formalizing the involutive symmetry that drives the parity phenomenon at the heart of quadratic reciprocity.
\end{remark}

It is worth mentioning that a different involutive symmetry appears at the level of residues modulo $p$ (see Appendix A).

\section{Conclusion}

In this work, we have provided an explicit involution-based perspective on Eisenstein's lattice-point proof of the quadratic reciprocity law. By encoding the Legendre symbols in terms of lattice-point counts and constructing a simple fixed-point-free involution, we have reduced the classical number-theoretic statement to an elementary combinatorial parity argument. This perspective highlights an explicit involutive symmetry behind Eisenstein's classical lattice-point argument.

Beyond its elegance and conceptual clarity, this method illustrates a broader principle: certain deep results in number theory can be understood as manifestations of simple involutive symmetries. While the approach is naturally suited to problems governed by parity and multiplicative norms, it also inspires potential extensions to other contexts in arithmetic combinatorics, where fixed-point analysis and explicit involutions may reveal hidden structure. 

Overall, the explicit involutive perspective not only simplifies classical proofs but also provides a unifying viewpoint connecting combinatorial and arithmetic reasoning, reinforcing the power of symmetry-based methods in number theory.

\appendix

\section*{Appendix A: Another involution on residues}

Let $p$ and $q$ be distinct odd primes. For $x=1,\dots,p-1$, write the Euclidean division
\[
qx = p m_x + r_x,
\qquad
1 \le r_x \le p-1.
\]
By definition,
\[
N_p(q)
=
\#\left\{
1 \le x \le \frac{p-1}{2}
:\;
r_x > \frac{p}{2}
\right\}.
\]
Consider the map
\[
T(x) = p-x,
\qquad 1 \le x \le p-1.
\]
Since $p$ is odd, this involution has no fixed point and partitions $\{1,\dots,p-1\}$ into pairs $\{x,p-x\}$. Multiplying by $q$ modulo $p$, we obtain
\[
q(p-x) \equiv -qx \pmod p.
\]
Hence, if
\[
qx \equiv r_x \pmod p,
\]
then
\[
q(p-x) \equiv p-r_x \pmod p,
\]
so that
\[
r_{p-x} = p - r_x.
\]
It follows immediately that
\[
r_x > \frac{p}{2}
\quad\Longleftrightarrow\quad
r_{p-x} < \frac{p}{2}.
\]
Thus, over the full set $\{1,\dots,p-1\}$,
\[
\#\{x : r_x > p/2\}
=
\#\{x : r_x < p/2\}
=
\frac{p-1}{2}.
\]
This global symmetry is the essential combinatorial ingredient. Let
\[
H = \left\{1,\dots,\frac{p-1}{2}\right\}.
\]
Each pair $\{x,p-x\}$ contains exactly one element of $H$. Moreover, within each pair exactly one residue among $r_x$ and $r_{p-x}$ lies above $p/2$. Hence $N_p(q)$ counts how many of these ``large'' residues correspond to the representative lying in $H$. We define a sign associated to each residue $r_x$ to keep track of whether it lies above or below $p/2$:
\[
\varepsilon_x =
\begin{cases}
+1 & \text{if } r_x < p/2,\\
-1 & \text{if } r_x > p/2,
\end{cases}
\quad \text{so that } r_x > p/2 \;\Longleftrightarrow\; \varepsilon_x = -1.
\]
This sign is used to encode whether a residue contributes a $-1$ factor in Gauss's lemma. From the symmetry above, we have
\[
\varepsilon_{p-x} = -\varepsilon_x,
\]
and therefore,
\[
\prod_{x=1}^{p-1} \varepsilon_x = (-1)^{(p-1)/2}.
\]
Indeed each pair $\{x,p-x\}$ contributes $\varepsilon_x \varepsilon_{p-x}=-1$, and there are $(p-1)/2$ such pairs. Grouping the factors by pairs $\{x,p-x\}$, we obtain
\[
(-1)^{N_p(q)}
=
\prod_{x=1}^{(p-1)/2} \varepsilon_x.
\]
Repeating the same construction with $p$ and $q$ interchanged yields
\[
(-1)^{N_q(p)}.
\]
To make the connection with Gauss's lemma explicit, observe that the
points of $S$ satisfying $qx>py$ are exactly counted by
\[
\sum_{x=1}^{(p-1)/2}\left\lfloor \frac{qx}{p}\right\rfloor,
\]
since for fixed $x$ this floor counts the integers
$1\le y\le (q-1)/2$ such that $py<qx$.
Similarly, the points satisfying $py>qx$ are counted by
\[
\sum_{y=1}^{(q-1)/2}\left\lfloor \frac{py}{q}\right\rfloor.
\]
A closer inspection of these products, carried out in the same manner for $p$ and $q$, shows that the parity difference between the two half-systems produces precisely the exponent
\[
\frac{(p-1)(q-1)}{4}.
\]
Consequently
\[
(-1)^{N_p(q)+N_q(p)}
=
(-1)^{\frac{(p-1)(q-1)}{4}},
\]
which proves the quadratic reciprocity law.

\end{document}